\numberwithin{equation}{section}
\theoremstyle{plain}
\newtheorem{theo}{Theorem}[section]
\newtheorem{lem}[theo]{Lemma}
\newtheorem{prop}[theo]{Proposition}
\newtheorem{definition}[theo]{Definition}
\newtheorem{defi}[theo]{Definition}
\newcommand{\I}{\mathrm{i}}
\newcommand{\ks}{\operatorname{k}}
\renewcommand{\Re}{\operatorname{Re}}
\newcommand{\p}{\partial}
\newcommand{\Ric}{\operatorname{Ric}}
\newcommand{\cI}{\mathscr{I}}
\newcommand{\cZ}{\mathscr{Z}}
\newcommand{\cP}{\mathscr{P}}
\newcommand{\cW}{\mathscr{W}}
\newcommand{\cH}{\mathscr{H}}
\newcommand{\cQ}{\mathscr{Q}}
\newcommand{\cT}{\mathscr{T6}}
\author{Masoud Ganji}
\address{M.G. University of New England, School of Science and Technology, Armidale NSW 2351, Australia}
\email{mganjia2@une.edu.au}
\author{Gerd Schmalz}
\address{G.S. University of New England, School of Science and Technology, Armidale NSW 2351, Australia}
\email{schmalz@une.edu.au}
\author{Daniel Sykes}
\address{D.S. University of New England, School of Science and Technology, Armidale NSW 2351, Australia}
\email{dsykes5@une.edu.au}
\title[Quasi-Einstein shearfree spacetimes lifted from Sasakian manifolds]{Quasi-Einstein shearfree spacetimes lifted from Sasakian manifolds}
\date{}
\begin{document}
 \rightline{\today}
\definechangesauthor[name={GS}, color=red]{GS}
\definechangesauthor[name={D}, color=blue]{D}
\definechangesauthor[name={MG}, color=purple]{MG}
\maketitle

\begin{abstract}
   In this article we prove that a certain class of {\it smooth} Sasakian manifolds admits lifts to 4-dimensional quasi-Einstein shearfree spacetimes of Petrov type II or D. This is related to an analogous result by Hill, Lewandowski and Nurowski \cite{HLN} for general {\it real-analytic} CR manifolds. In particular, our result holds for smooth tubular CR manifolds. Furthermore, we show that any Sasakian manifold with underlying Kähler-Einstein manifold with non-zero Einstein constant has a lift to a shearfree Einstein metric of Petrov type II or D.
\end{abstract}


\section{Introduction}
		

A foliation by integral curves of a non-vanishing null vector field $\ks$ on a Lorentzian manifold ($\mathscr{M},g)$ is called a shearfree congruence if
$$\mathcal L_{\ks} g(v,w) = \rho g(v,w),$$
for some function $\rho$ and for all vector fields $v,w \in \ks^\perp$. These integral curves are automatically geodesics and in the case of a 4-dimensional Lorentzian manifold they can be interpreted as light rays. The 4-dimensional case directly applies to General Relativity and has been intensively studied by many authors, see e.g. \cites{DKS, RT1, LNT, Arman1, Arman2, Arman3, Rob, Traut1, Tafel1, RT1, Traut2, Traut3, Traut4, G, XF1, XF2}. The famous Goldberg-Sachs Theorem \cite{GoldSa} (See also  \cite{GHN} for a generalisation of the Goldberg-Sachs theorem.) relates the existence of shearfree congruences to algebraically special solutions of the Einstein field equations for which the Weyl tensor satisfies certain degeneracy conditions, expressed through its so-called Petrov type.  Of particular interest are the shearfree congruences with underlying Lorentzian metrics $g$ that satisfy the Einstein condition
$$\Ric (g) = \Lambda g,$$
or the relaxed quasi-Einstein condition
\begin{equation}\label{qec}
    \Ric (g) = \Lambda g + \Phi \lambda^2,
\end{equation}
where $\Ric (g)$ is the Ricci curvature of $g$, $\Lambda$ is the so-called cosmological constant, $\lambda = g(\ks,\cdot)$ and $\Phi$ is a smooth function corresponding to the energy momentum tensor of pure radiation.

Robinson and Trautman \cite{RT1} described the relation between shearfree congruences in 4-dimensional spacetimes and the CR structure of their 3-dimensional leaf spaces. A CR structure on a 3-dimensional manifold is a distribution $D$ of rank 2 equipped with a field of endomorphisms $J\colon D \to D$ with $J^2 = \operatorname{-Id}$. The distribution $D$ on the leaf space $M$ is induced by the distribution $\ks^\perp$ on $\mathscr{M}$, and $J$ is induced by the conformal structure on $\faktor{\ks^\perp}{\ks}$. The CR structure of $M$ can be encoded as a pair $(\mu,\lambda)$ comprised of a complex 1-form $\mu$ and a real 1-form $\lambda$, such that $D = \ker \lambda$ and $\mu$ is a complex coordinate on $D$, that is $\mu\wedge\bar{\mu}\wedge \lambda\neq 0$. By $[(\mu,\lambda)]$ we denote the equivalence class of pairs $(\mu,\lambda)$ that produce the same CR structure on $M$.

Locally, any shearfree Lorentzian metric associated with a given CR manifold $(M,[(\mu,\lambda)])$ can be written as
\begin{equation*}
    g = 2{\cP}^2\left(\mu \bar{\mu} + \lambda\left(dr+\cW\mu + \overline{\cW}\bar{\mu} + \cH \lambda\right)\right),
\end{equation*}
where $\cP\neq 0, \cH, r$ are real functions, and $\cW$ is a complex function on $\mathscr M$ (see e.g. \cites{RT1,HLN}). Any metric from this family is called a lift of the underlying CR structure. 

As one of the main results in the paper \cite{HLN}, Hill, Lewandowski and Nurowski proved that for any real-analytic CR manifold, there exists a lift that renders the Lorentzian manifold $(\mathscr{M},g)$ {\it quasi-Einstein}, that is, \eqref{qec} is satisfied. Moreover, $g$ is of Petrov type II or D. 

In this paper, we focus on the case when the underlying CR manifold has a Sasakian structure, i.e., it is strictly pseudoconvex and it admits an infinitesimal CR automorphism transversal to the distinguished distribution $D$ (see Section 2 for precise definitions).  The main result of this article is Theorem \ref{sasaki-theoint}, which is an analog to the theorem by Hill, Lewandowski and Nurowski, cited above. We remove the assumption of real analyticity and prove that a particular class of \emph{smooth} 3-dimensional Sasakian manifolds admit lifts to quasi-Einstein Lorentzian manifolds with positive cosmological constant $\Lambda$ and of Petrov type II or D. Petrov type II or D means that the Weyl tensor is to some degree degenerate, that is certain components vanish and another certain component is different from $0$. In what follows, we use $(z = x + \I y, w = u + \I v)$ as the local coordinates of $\mathbb{C}^{2}$.
\begin{theo}\label{sasaki-theoint}
Let $\left(M,D,J\right)$ be a 3-dimensional Sasakian manifold given by a defining equation $v = F(z,\bar{z})$, where $F\colon \mathbb C \to \mathbb R$ is such that the limits
    \begin{equation}
        \displaystyle{\lim_{|z| \to \infty}}F_{z\bar{z}} > 0 \quad \text{and} \quad \displaystyle{\lim_{|z| \to \infty}}\partial_{z\bar{z}}\log \left(F_{z\bar{z}}\right) < 0
    \end{equation}
exist, and let $\Lambda$ be an arbitrary positive constant. Then, there exists a 4-dimensional Lorentzian manifold $(\mathscr M = M \times \mathbb{R}, g)$, which is a lift of $M$, such that $g$ is of Petrov type II or D, and satisfies the quasi-Einstein equation
$$\Ric (g) = \Lambda g + \Phi \lambda^2,$$
where $\Phi$ is some real function.
\end{theo}
We illustrate this result in the special case of tubular hypersurfaces. In this case the cosmological constant can be arbitrary.
\begin{theo}\label{tubular-theo}
Let $\left(M,D, J\right)$ be a 3-dimensional tubular CR manifold with defining function $v = F(y)$, and let $\Lambda$ be an arbitrary constant. Then, there exists a 4-dimensional Lorentzian manifold $(\mathscr M = M \times \mathbb R, g)$, which is a lift of $M$, such that $g$ is of Petrov type II or D, and satisfies the quasi-Einstein equation
$$\Ric (g) = \Lambda g + \Phi \lambda^2,$$
where $\Phi$ is some real function.
\end{theo}
In Theorem \ref{ESAKint}, we consider Sasakian manifolds for which the associated Kähler manifold of integral curves of the Reeb vector field is Einstein with non-zero Einstein constant. We show that such Sasakian manifolds admit a lift to an Einstein Lorentzian manifold. 
\begin{theo}\label{ESAKint}
Let $(M,D,J)$ be a 3-dimensional Sasakian manifold with defining function $v = F(z,\bar{z})$, and assume the underlying Kähler manifold is Einstein with non-zero Einstein constant $\Lambda_o$ and let the cosmological constant, $\Lambda$, be any real value such that $\Lambda\Lambda_o > 0$. Then, there exists a 4-dimensional Lorentzian manifold $(\mathscr M = M \times \mathbb R, g)$, which is a lift of $M$, such that $g$ is of Petrov type II or D, and satisfies the Einstein equation
$$\Ric (g) = \Lambda g.$$
\end{theo}
\section{CR manifolds and Sasakian manifolds}
In this section, we recall some facts regarding CR manifolds and Sasakian manifolds, and fix our notational conventions.

\begin{definition}
A 3-dimensional CR manifold is a triple $(M,D,J)$ where $M$ is a 3-dimensional manifold, $D$ a codimension 1 distribution and $J$ a smooth field of endomorphisms with $J^2=-\operatorname{Id}$ on $D$.  
\end{definition}

We assume that $(M,D,J)$ is strictly pseudoconvex, i.e. for any (local) non-vanishing section $X$ of $D$, $[X,JX] \not\in D$ at any point. For any choice of $X$ we have an adapted complex frame $(\partial,\bar{\partial},\partial_o)$, where
$$\partial= X - \I JX, \qquad \partial_o = \I [\partial, \bar{\partial}] = -2[X,JX].$$
The complex vector field $\partial = X - \I JX$ spans the $ +\I$-eigen-distribution $D^{1,0}$ of $J$ in $D \otimes \mathbb C$. We denote the corresponding dual coframe by $(\mu,\bar{\mu},\lambda)$. Since $\bar{\partial}$ and $\bar{\mu}$ are just the conjugates of $\partial$ and $\mu$, we will also refer to $(\partial, \partial_o)$ and $(\mu,\lambda)$ as a frame and coframe, respectively. The real 1-form $\lambda$ and the complex 1-form $\mu$ completely determine the CR structure on $M$, with $D = \ker \lambda$ and $\mu|_D \colon D \to \mathbb C$ as a complex coordinate on $D$. Strict pseudoconvexity of $M$ translates to
$$d\lambda \wedge \lambda \neq 0.$$ 
Our choice implies the structure equations
\begin{align}\label{2.1}
\begin{split}
    d\lambda &= \mathrm{i} \mu \wedge \bar{\mu} + c\mu \wedge \lambda + \bar{c}\bar{\mu} \wedge \lambda\\
    d\mu &= \alpha\mu \wedge \lambda + \beta \bar{\mu} \wedge \lambda,
    \end{split}
\end{align}
%
where $c, \alpha, \beta$ are complex valued functions on $M$. 

Notice that a strictly pseudoconvex CR manifold carries the structure of a contact manifold with contact form $\lambda$. For any choice of a contact form $\lambda$, one defines the uniquely determined \emph{Reeb vector field} $\cZ$ that satisfies $\cZ \, \lrcorner \, \lambda=1$ and $\cZ \, \lrcorner \, d\lambda = 0$.  

Any other adapted frame $(\partial',\bar{\partial}',\partial_o')$ and coframe $(\mu',\bar{\mu}',\lambda')$ express through the original frame and coframe by
\begin{align}
    \partial' &= \frac{1}{f}\partial, \label{trans1}&  
    \qquad \partial_o' &= \frac{1}{|f|^2}\big(\partial_o - \ell\partial - \bar{\ell}\bar{\partial}\big), \\
    \mu' &= f(\mu + \ell \lambda), &\qquad \lambda' &= |f|^2\lambda, \label{trans2}
\end{align}
where $f\ne 0$ and $\ell$ are complex valued functions, and
\begin{align}\label{equa11}
\begin{split}
    &\ell = -\I \bar{\partial}(\log f), \quad \alpha' = \frac{1}{|f|^2}\left(\alpha - \partial_o (\log f) + \ell\partial (\log f) + \partial \ell + \ell c \right), \\
    & c' = \frac{1}{f}\left(c - 2\I\bar{\ell} + \partial (\log f)\right), \quad \beta' = \frac{1}{\bar{f}^2}\left(\beta + \I \ell^2 + \bar{\partial}\ell + \bar{c}\ell\right).
    \end{split}
\end{align}
\begin{definition}
A CR function on a CR manifold $(M,D,J)$ is a complex-valued $\mathcal C^1$-function $\zeta$, such that 
$$\bar{\partial}(\zeta) = 0.$$
This definition does not depend on the choice of the adapted frame.
\end{definition}

A (local) realisation or embedding into $\mathbb C^2$ of a CR manifold $(M,D,J)$ is a (local) $\mathcal C^1$-embedding $M\to \mathbb C^2$ such that both components of the mapping are CR functions. Such embeddability is equivalent to the existence of two functionally independent CR functions $z$ and $w$, i.e.
$$dz \wedge dw \neq 0.$$
If the CR manifold $(M,D,J)$ is (locally) embedded as a real hypersurface
$$v=F(z,\bar{z},u)$$
in $\mathbb C^2$ with local coordinates $(z,w = u + \I v),$ we can choose an adapted frame and coframe as in the Lemma below. 

\begin{lem}\label{lemmaframcoframe}
Let $(M,D,J)$ be a CR manifold  locally embeddable as a real hypersurface in $\mathbb C^2=\{z=x+\I y, w=u+\I v \}$
as $v=F(z,\bar{z},u)$ for some smooth real function $F$. Set
$$L = \frac{F_z}{F_u + \I}.$$
Then there exists a CR coframe $(\mu,\bar{\mu},\lambda)$ and dual frame $(\partial,\bar{\partial},\partial_o)$ where
\begin{align*}
\mu &=dz, \hspace{4cm} \lambda = \frac{du + L\,dz + \bar{L}\,d\bar{z}}{\I \left(\bar{\partial}L - \partial \bar{L}\right)},\\
\partial &= \partial_z - L\partial_u, \hspace{3cm} \p_o = \I \left(\bar{\partial}L - \partial \bar{L}  \right)\p_u.
\end{align*}
The coframe satisfies the structure equations:
$$d\mu=0, \hspace{2cm}d\lambda = \mathrm{i} \mu \wedge \bar{\mu} + c\mu \wedge \lambda + \bar{c}\bar{\mu} \wedge \lambda,$$
where
$$c =  - \p\log \left(\I \left (  \bar{\partial} L- \partial\bar{L} \right ) \right) - L_u.$$
In particular,
$$ \p \bar{c} = \bar{\p} c.$$
\end{lem}
\begin{proof}
Let $\mu = dz,$ and choose a contact form 
\begin{align*}
    \lambda' = J(dv - dF) &= (1 + F^2_u)du + (-\I F_z + F_uF_z)dz + (\I F_{\bar{z}} + F_uF_{\bar{z}})d\bar{z}\\
    &= (1 + F^2_u)(du + L\,dz + \bar{L}\,d\bar{z})
\end{align*}
with $L$ defined as above.
We need to find $\psi$ such that 
$$\lambda = \frac{\psi}{(1 + F_u^2)}\lambda'$$
satisfies \eqref{2.1}, i.e.
$$d\lambda = \psi \, d\left(L\,dz + \bar{L}\,d\bar{z}\right) + d\psi \wedge \frac{1}{\psi}\lambda \equiv \I \,dz \wedge d\bar{z} \ \operatorname{mod} \lambda.$$
Notice that the dual adapted frame to $(\mu,\bar{\mu},\lambda)$ becomes
\begin{equation}\label{coofr}
    \partial = \partial_z - L\partial_u, \quad \bar{\partial} = \partial_{\bar{z}} - \bar{L}\partial_u, \quad \partial_o = \frac{1}{\psi} \partial_u.
\end{equation}
It follows
\begin{equation}\label{coopsi}
    \frac{1}{\psi} = - \I (\partial \bar{L} - \bar{\partial}L) = - \I (\bar{L}_z - L_{\bar{z}} + L_{u}\bar{L} - \bar{L}_{u}L),
\end{equation}
and therefore
\begin{align*}
    \lambda = \frac{du + L\,dz + \bar{L}\,d\bar{z}}{\I \left(\bar{\partial}L - \partial \bar{L}\right)}, \quad  \p_o = \I \left(\bar{\partial}L - \partial \bar{L}  \right)\p_u.
\end{align*}
The structure function $c$, in \eqref{2.1}, now takes the form
\begin{equation}\label{cdef}
    c = \frac{\partial \psi}{\psi} - \psi\partial_{o}L = - \p\log \left(\I \left (  \bar{\partial} L- \partial\bar{L} \right ) \right) - L_u.
\end{equation}
Moreover, it follows from $\mu = dz$ and $d^2\lambda = 0$ that
\begin{equation}\label{cid}
    \p \bar{c} = \bar{\p} c.
\end{equation}
\end{proof}

In the remainder of the paper, we use Cartan's description of a CR manifold $M$, defining the CR structure by the equivalence classes $[(\mu,\lambda)]$ of adapted coframes, defined as above in \eqref{trans2}. The definition of Sasakian manifolds given below suits the purpose of this paper best. For other definitions and their equivalence see \cite{ACHK} or \cite{G}.
\begin{defi}
A Sasakian manifold (of dimension 3) is a strictly pseudoconvex CR manifold $(M,D,J)$ endowed with a vector field $\cZ$ transversal to $D$,
which is an infinitesimal automorphism, i.e. 
    \begin{equation}\label{infcr}
        \mathscr{L}_{\cZ} D\subseteq D,\qquad \mathscr{L}_{\cZ} J = 0.  
    \end{equation}
\end{defi}
Notice that the first condition in \eqref{infcr} is equivalent to $\cZ$ being the Reeb vector field for some contact form $\lambda$. In terms of a chosen adapted frame the second condition in \eqref{infcr} is equivalent to
$$\bar{\partial} \, \lrcorner \, \mathcal L_{\cZ} \mu = 0.$$
It is well known, see e.g. \cites{BRT,Jac,EKS1}, and easy to see that (3-dimensional) Sasakian manifolds are locally realisable as submanifolds of $\mathbb{C}^2$. Indeed, consider the manifold $M \times \mathbb R_r$. The complex vector fields $\partial$ and $\cZ + \I \partial_r$ define an integrable complex structure on $M \times \mathbb R$ in which $M$ is embedded as $r = 0$. In suitable local coordinates $z, w = u + \I v$ on the complex manifold $M \times \mathbb R$, $M$ can be expressed as
\begin{equation}\label{embed}
    v = F(z,\bar{z}).
\end{equation}
The leaf space of the integral curves of the Reeb vector field $\cZ=\partial_u $ is in this case simply $\mathbb C_z$. It has a natural Kähler structure with Kähler potential $F(z,\bar{z})$ and Kähler metric $h = 2F_{z\bar{z}}dz \, d\bar{z}$ in $\mathbb{C}_z$. On the other hand, a Kähler manifold with Kähler potential $F$  also completely determines the geometry of the Sasakian manifold $M$. We refer to the metric $h$ as the underlying Kähler metric of the Sasakian manifold. The Ricci form of the underlying Kähler metric is given by $2\I \operatorname{R}d z \wedge d\bar{z}$ with
\begin{align}
    \operatorname{R} &= -\partial_{z\bar{z}}\log \left(F_{z\bar{z}}\right),\label{ricKah}
\end{align}

From Lemma \ref{lemmaframcoframe}, for the embedding \eqref{embed} the expressions for the frame \eqref{coofr} simplify to
\begin{align}\label{sasframe}
\begin{split}
    \partial &= \partial_z+\I F_z\p_u, \qquad \partial_o = 2F_{z\bar{z}}\partial_u
    \end{split}   
\end{align}
and the function $c$ takes the form
\begin{align} \label{fun-c} 
    c &= -\p_z\log\left (F_{z\bar{z}}\right ).
\end{align}

We now show that $3$-dimensional Sasakian manifolds can also be characterised through the structure function $c$ defined by \eqref{cdef}.
\begin{prop}\label{sasaki}
Let $(M,D,J)$ be a 3-dimensional strictly pseudoconvex CR manifold. Suppose it admits a non-constant CR function $z$. Let $(\mu,\lambda)$ be an adapted coframe for $M$, where $\mu = dz \neq 0$, and let $(\partial,\partial_o)$ be the dual frame. Let $c$ be the structure function defined by \eqref{cdef}. Then $M$ is Sasakian if and only if 
$$\partial_{o}c = 0.$$
\end{prop}
\begin{proof}
If $(M,D,J)$ is Sasakian then with respect to the coordinates $(z,u + \I v)$ from the embedding \eqref{embed} 
$$\partial_o = \frac{1}{\psi}\partial_u.$$ 
Since the function $c$ from \eqref{fun-c} does not depend on $u$, it follows that
$$\partial_{o}c = 0.$$
Proving now the converse, we assume $\partial_{o}c = 0$. We show that there exists a real function $A$, such that the vector field
$$\cZ = \operatorname{e}^\varphi \partial_o,$$
is an infinitesimal CR automorphism transversal to $D$, i.e. it is the Reeb vector field for the contact form
$$\operatorname{e}^{-\varphi}\lambda,$$
and it preserves the complex structure on the CR distribution, i.e.
$$\bar{\partial} \, \lrcorner \, \mathcal L_{\cZ} \mu = \bar{\partial} \, \lrcorner \, (d \left(\cZ \, \lrcorner \, \mu \right) + \cZ \, \lrcorner \, d\mu) = 0.$$ 
The condition
$$\operatorname{e}^{\varphi}\partial_o \, \lrcorner \, d \left(\operatorname{e}^{-\varphi}\lambda\right)=0,$$
is equivalent to
\begin{equation}\label{eq1}
    (\partial \varphi - c)\mu + (\bar{\partial} \varphi - \bar{c})\bar{\mu} = 0.
\end{equation}
Therefore, it remains to show that the equation 
\begin{equation}\label{eq2}
    \partial \varphi = c,
\end{equation}
has a real solution. Let $(z = x + \I y, u)$ be local coordinates as above, such that $\partial_o = \frac{1}{\psi}\partial_u$. Since $\partial_{o} c = 0$, it follows from \eqref{cid} that
$$\partial_z \bar{c} = \partial_{\bar{z}}c.$$ 
Substituting  $c(x,y) = a(x,y) + \I b(x,y)$ and  $\partial_z = \frac{1}{2}(\partial_x - \I \partial_y)$ into the above equation gives us
\begin{equation}\label{equa111}
    b_x = -a_y.
\end{equation}
For the real function $\varphi$, the equation \eqref{eq2} is equivalent to
\begin{equation}\label{equa222}
    \begin{cases}
        \varphi_x = 2a   \\
        \varphi_y = -2b.  
    \end{cases}
\end{equation}
Now, the condition \eqref{equa111} guarantees the existence of a local solution $\varphi(x,y)$ with $\partial_{o}\varphi = 0$. 
It follows, 
\begin{equation*}
    \partial \varphi = \partial_z \varphi - L\partial_u \varphi = \partial_z \varphi = c. \qedhere
\end{equation*}
\end{proof}
As a special case we consider the so-called tubular hypersurfaces $v = F(y)$, which have an underlying Kähler potential $F(y)$ depending only on one real coordinate. In this case, from the Lemma \ref{lemmaframcoframe}, the expressions for the frame \eqref{coofr} and structure function simplify to
\begin{equation}
    \partial = \partial_z +\frac{1}{2} F_y \partial_u, \qquad \partial_o = \frac{1}{2} F_{yy}\partial_u, \qquad c = \frac{\I F_{yyy}}{2F_{yy}}. \label{tubeframe}
\end{equation}

\section{Shearfree congruences of null geodesics and CR geometry}
In this section, we recall some notions related to shearfree congruences of null geodesics on Lorentzian manifolds following the paper by Hill, Lewandowski and Nurowski \cite{HLN}.
\begin{definition}\label{shearfree}\cite{AGS}
A shearfree congruence of null geodesics on a 4-dimensional Lorentzian manifold $(\mathscr{M},g)$ is a foliation by integral curves of a nowhere vanishing vector field $\ks$, such that
	\begin{itemize}
		\item[(i)] The vector field $\ks$ is null, i.e. $g(\ks,\ks) = 0$.
		
		\item[(ii)] $\mathscr{L}_{\ks} g = \rho g + \theta \chi $, where $\theta = g(\ks,\cdot)$, $\rho$ is a real function on $\mathscr{M}$ and $\chi$ is a 1-form.
	\end{itemize}
For the sake of brevity we will call the vector field $\ks$ shearfree for the Lorentzian metric $g$.	
\end{definition}
In the definition above, and throughout this paper, we use the standard notation of the symmetric tensor product of two 1-forms $\omega$ and $\gamma$,
$$\omega \gamma = \frac{1}{2}\left(\omega \otimes {\gamma} + \gamma \otimes \omega \right).$$
Notice that conditions (i) and (ii) in the definition above mean that the metric $g$ changes conformally under the flow of $\ks$, when restricted to the distribution
$${\ks}^\perp = \{X \in T\mathscr{M} \, | \  g(X,\ks) = 0\}.$$
In this sense a shearfree vector field can be considered as a generalisation of a conformal Killing vector field.

Following \cite{HLN}, we define, for a given 3-dimensional CR manifold $(M,[(\mu, \lambda)])$, a class of Lorentzian metrics on the line bundle $\mathscr{M} = M \times \mathbb{R}$
\begin{equation}\label{shearfreemetric}
    g = 2{\cP}^2\left(\mu \bar{\mu} + \lambda(dr + \cW\mu + \overline{\cW}\bar{\mu} + \cH \lambda)\right).
\end{equation}
Here $\cP$ is a non-zero real function, $r,\cH$ are real functions and $\cW$ is a complex function on $\mathscr{M}$. We have used the notation $\lambda$ and $\mu$ for the forms on $M$ as well as for their pullbacks to $\mathscr M$. We will call $(\mathscr M,g)$ a \emph{lift} of the CR manifold $M$.

The vector field $\ks = \frac{\partial}{\partial r}$ is shearfree for all metrics of the family \eqref{shearfreemetric}. This family of metrics is independent of the choice of coframe $(\mu, \lambda),$ depending only on the CR structure of $M$. With respect to the choice of coframe on $\mathscr{M}$
\begin{align}\label{coframe}
    {\theta}^1 = \cP\mu, \qquad {\theta}^2 = \cP\bar{\mu}, \qquad {\theta}^3 = \cP\lambda, \qquad {\theta}^4 = \cP \left(dr + \cW\mu + \overline{\cW}\bar{\mu} + \cH\lambda \right),
\end{align}
and corresponding frame
\begin{equation}\label{frame}
    e_1 = \frac{1}{\cP}(\partial - \cW{\partial}_r), \quad e_2 = \frac{1}{\cP}(\bar{\partial} - \overline{\cW}{\partial}_r), \quad e_3 = \frac{1}{\cP}(\partial_o - \cH{\partial}_r), \quad e_4 = \frac{1}{\cP}{\partial}_r,
\end{equation}
the metric \eqref{shearfreemetric} takes the form
\begin{equation}\label{metric2}
    g = 2(\theta^1\theta^2 + \theta^3\theta^4).
\end{equation}
In the Theorem \ref{theoHLN}  below, proved in \cite{HLN}, we summarise a list of consequences for the functions $\cP, \cW$ and $\cH$ resulting from the vanishing of the complexified Ricci curvature restricted to the so-called $\alpha$-planes,  that is the following components with respect to the frame \eqref{frame} chosen above 
$$\Ric_{44}=\Ric_{22}=\Ric_{24} = 0.$$

Then the Goldberg-Sachs theorem \cites{GoldSa, GHN} implies that the following components of the Weyl tensor vanish
$$\operatorname{C_{4141}}=\operatorname{C_{4341}}=0.$$
Here, again, the subscripts indicate the components of the tensor with respect to the frame \eqref{frame}.
Together with 
$$\operatorname{C_{4132}}\neq 0$$
this establishes that the Lorentzian metric is of Petrov type II or D.

\begin{theo}\cite{HLN}\label{theoHLN}
1. Assume that for the metric \eqref{shearfreemetric} the components of the Ricci curvature $\Ric_{44}$, $\Ric_{22}$, $\Ric_{24}$ with respect to the frame \eqref{frame} vanish. Then
then 
\begin{equation}\label{eqx}
    \cW = \I \mathcal{X} e^{-\I r} + \mathcal{Y},
\end{equation} 
where $\mathcal{X}$ and $\mathcal{Y}$ are complex-valued functions, which do not depend on $r$ and the following statements hold:
            \begin{equation}\label{R44}
               \cP = \frac{p}{\cos(\frac{r + s}{2})},
            \end{equation}
            
            \begin{align}
               \p t + (c - t)t = 0, \quad t = c + 2\p \log p - e^{\I s} \mathcal{X}, \label{equ-t}
            \end{align}
              
            \begin{equation}\label{eqx-y}
                \mathcal{Y} = \I c + 2 \I \p\, \log p + \p s - 2\I t,  
            \end{equation}
where $p$ and $s$ are real-valued functions with $p_r = s_r = 0$, and the complex functions $c$, $\mathcal{X}$, and $\mathcal{Y}$ are defined by \eqref{2.1} and \eqref{eqx}, respectively.

2. Vice versa, if a metric  \eqref{shearfreemetric} satisfies \eqref{eqx},  \eqref{R44}, \eqref{equ-t}, \eqref{eqx-y} then 
$$\Ric_{44}=\Ric_{22}=\Ric_{24}=0.$$

3. Furthermore, assuming $\Ric_{44}=\Ric_{22}=\Ric_{24}=0$,

            \begin{equation}\label{equa-p-s}
                \Ric_{12} = \Ric_{34} = \Lambda \iff \mathcal{B} = 0,
            \end{equation}
with            
        
            \begin{small}        
                \begin{equation*}
                    \mathcal{B} = \left(\p\bar{\p} + \bar{\p}\p + \bar{c}\,\p + c\,\bar{\p} + \frac{1}{2}|c|^2 + \frac{3}{4}\left (\bar{\p}c + \p \bar{c}\right) - \frac{3}{2}\left(\p \bar{t} + \bar{\p}t + |t|^2\right)\right)p - \frac{m + \bar{m}}{p^3} - \frac{2}{3}\Lambda p^3,
                \end{equation*}
            \end{small}

where $m$ is a complex function satisfying $m_{r} = 0$.

4. Assuming   $\Ric_{44}=\Ric_{22}=\Ric_{24}=0$ and  $\Ric_{12} = \Ric_{34} = \Lambda$, we have
            \begin{equation}
                \Ric_{13} = 0 \iff \p m + 3(c - t)m = 0. \label{R13}
            \end{equation}

5. Finally, assuming $\Ric_{44}=\Ric_{22}=\Ric_{24}=0$,  $\Ric_{12} = \Ric_{34} = \Lambda$  and  $\Ric_{13} = 0$, we have
             \begin{equation}\label{pet2}
                \operatorname{C_{4132}} = \frac{(1 + e^{\I (r + s)})^3}{2p^6}m.
            \end{equation}

\end{theo}
We will use the results of the theorem above in the case of Sasakian manifolds, given by a defining equation \eqref{embed} in coordinates $(z,w = u + \I v)$. In this setting, the identity  \eqref{cid}, $\p\bar{c} = \bar{\p} c$, allows us to simplify some of the expressions. Moreover, one may notice that $t = 0$ is a solution of the equation \eqref{equ-t}. For the remainder of the paper, we proceed under the ansatz $t \equiv 0$, with the CR structure on $M$ given by the adapted frame and coframe defined in \eqref{sasframe}. In addition, we assume $s = 0$ and $p_u = 0$. As a consequence of these assumptions and Theorem \ref{theoHLN}, the function $\cH$ takes the form
\begin{equation}\label{curlH}
    \cH = \frac{m}{p^4}e^{2\I r} + \frac{\bar{m}}{p^4}e^{-2\I r} + \cQ e^{\I r} + \bar{\cQ}e^{-\I r} + \cT,
\end{equation}
where
\begin{align}
    &\cQ = \frac{3m + \bar{m}}{p^4} + \frac{2}{3}\Lambda p^2 + \frac{2\p p\, \bar{\p} p - p \left (\p \bar{\p} p + \bar{\p}\p p \right)}{2p^2} - \bar{\p} c\label{funQ}\\
    &\cT = \frac{3m + 3\bar{m}}{p^4} + 2\Lambda p^2 + \frac{2\p p\, \bar{\p} p - p \left (\p \bar{\p} p + \bar{\p}\p p \right)}{p^2} - 2\bar{\p} c,\label{funI}
\end{align}
and
\begin{small}
    \begin{equation}\label{R33}
        \Ric_{33} = \Bigg(\frac{8}{p^4} \left (\partial + 2c\right )\Big [p^2 \left (\partial \bar{\cI} - 2 \Lambda \left( 2\bar{\p} \log p + \bar{c}\right)p^2 \right)\Big] + \frac{16\Lambda}{p}\mathcal{B}_o + \frac{16\I}{p^3}\p_o\left(\frac{m}{p^4} \right)\Bigg) \cos^4\left(\frac{r}{2}\right),
    \end{equation}
\end{small}
where the function $\cI$ is defined by
\begin{equation}\label{eqI}
    \cI = \p\left(\p\log p + c \right) + \left(\p\log p + c \right)^2,
\end{equation}
and
\begin{equation}\label{Bo}
    \mathcal{B}_o = \mathcal{B}|_{t = 0} = \left(\p\bar{\p} + \bar{\p}\p + \bar{c}\,\p + c\,\bar{\p} + \frac{1}{2}|c|^2 + \frac{3}{4}\left (\bar{\p}c + \p \bar{c}\right )\right)p - \frac{m + \bar{m}}{p^3} - \frac{2}{3}\Lambda p^3.
\end{equation}
\section{Quasi-Einstein Lorentzian manifolds}
Hill, Nurowski and Lewandowski proved in \cite{HLN} that for any real analytic CR manifold there exists a representative in the family of metrics \eqref{shearfreemetric} of Petrov type II or D, which is quasi-Einstein, that is, it satisfies the equations
$$\operatorname{Ric}(g) = \Lambda g + \Phi\lambda^2,$$
where $\Phi$ is some real function. We prove an analogous result for Sasakian manifolds without the assumption of real analyticity. 
 
Prior to presenting our main result, we state a theorem proved by Du and Ma in \cite{DM}, (see also \cite{DD} by Dong and Du) which gives sufficient conditions for the existence of a solution for the so-called logistic elliptic equation. The existence of such solution is crucial in the proof of our theorem.\\
\begin{theo}\cites{DD, DM}\label{logeq}
Consider the logistic elliptic equation
	\begin{equation}\label{logic}
		-\Delta q = a(x)q - b(x)q^{\sigma}, \ x \in \mathbb{R}^{n},
	\end{equation}
where, $\sigma > 1$, and $a(x), b(x)$ are continuous real-valued functions with $b(x) > 0$, for all $x \in \mathbb{R}^{n}$. Then, the equation \eqref{logic} has a unique positive solution $q$, if the following limits exist and are positive
	\begin{equation*}
	    \displaystyle{\lim_{|x| \to \infty}}a(x) > 0 \quad \text{and}\quad \displaystyle{\lim_{|x| \to \infty}}b(x) > 0.
	\end{equation*}
\end{theo}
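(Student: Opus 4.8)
My plan is to establish existence by the method of ordered sub- and supersolutions carried out on an exhaustion of $\mathbb{R}^n$ by concentric balls $B_R$, followed by elliptic estimates to pass to the limit $R\to\infty$, and to establish uniqueness by exploiting the \emph{sublinear} structure of the nonlinearity: for each fixed $x$ the map $q\mapsto(a(x)q-b(x)q^\sigma)/q=a(x)-b(x)q^{\sigma-1}$ is strictly decreasing on $(0,\infty)$. First note that the hypotheses yield the bounds needed for a supersolution: a continuous function on $\mathbb{R}^n$ with a finite limit at infinity is bounded, so $a$ is bounded above, and $b$ is continuous, positive, with a positive limit, so $b\ge b_0>0$ everywhere. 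Hence $\sup_x a(x)/b(x)<\infty$, and any constant $\bar q\equiv M$ with $M^{\sigma-1}\ge\sup_x a(x)/b(x)$ satisfies $-\Delta\bar q=0\ge a\bar q-b\bar q^\sigma$, i.e. it is a global supersolution, and one may take $M=(\sup a/b_0)^{1/(\sigma-1)}$.

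For subsolutions, fix $B_R$ and let $\lambda_1(R)$ be the principal Dirichlet eigenvalue of $-\Delta-a(x)$ on $B_R$, with positive eigenfunction $\psi_R$. By the variational characterisation $\lambda_1(R)$ is non-increasing in $R$, and by testing with a function supported in a wide annulus sitting far from the origin, where $a(x)$ is close to the positive number $\lim_{|x|\to\infty}a(x)$, one obtains $\lambda_1(R)<0$ once $R\ge R_0$. For such $R$ and $\varepsilon>0$ small enough that $\varepsilon^{\sigma-1}\sup_{B_R}(b\,\psi_R^{\sigma-1})\le-\lambda_1(R)$ and $\varepsilon\psi_R\le M$, the function $\underline q=\varepsilon\psi_R$ is a subsolution with $\underline q\le\bar q$. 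Monotone iteration between $\underline q$ and $\bar q$ produces a classical solution $q_R\in C^{2,\alpha}(\overline{B_R})$ of $-\Delta q_R=aq_R-bq_R^\sigma$ in $B_R$, $q_R=0$ on $\partial B_R$, with $0<\varepsilon\psi_R\le q_R\le M$; invoking Brezis--Oswald uniqueness on each bounded domain $B_R$ makes $q_R$ canonical, and comparing $q_{R_1}$ with the restriction of $q_{R_2}$ to $B_{R_1}$ (a positive supersolution there, with larger boundary data) shows $R\mapsto q_R$ is non-decreasing.

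Since $0<q_R\le M$ uniformly in $R$, interior $L^p$ and Schauder estimates bound the $q_R$ in $C^{2,\alpha}_{\mathrm{loc}}(\mathbb{R}^n)$, and together with monotonicity in $R$ this gives $q_R\uparrow q$ in $C^2_{\mathrm{loc}}$ to a classical solution with $0\le q\le M$. As $q\ge q_{R_0}>0$ on $B_{R_0}$, the strong maximum principle applied to the linear operator $-\Delta-(a-bq^{\sigma-1})$ forces $q>0$ on all of $\mathbb{R}^n$; this $q$ is in fact the minimal positive solution. One also constructs the maximal solution $V$ as the decreasing limit of the boundary blow-up (large) solutions $v_R$ of the same equation on $B_R$, which exist because $\sigma>1$ (Keller--Osserman), dominate every positive solution, and satisfy $V\le M$ by comparison with the constant-coefficient large solution.

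The genuinely delicate part is uniqueness, since on the unbounded domain the clean integration-by-parts identity of Brezis--Oswald is not immediately available. The plan is: every positive solution lies between $q$ and $V$, so it suffices to prove $q=V$; for this, show that bounded positive solutions \emph{stabilise at infinity} to $L:=(a_\infty/b_\infty)^{1/(\sigma-1)}$ — the key Liouville-type input, using that $a,b$ converge and that the only bounded positive entire solution of the limiting constant-coefficient equation $-\Delta w=a_\infty w-b_\infty w^\sigma$ is the constant $L$ — and then run a Díaz--Saá/Brezis--Oswald convexity identity localised by a cutoff exhausting $\mathbb{R}^n$, the error terms on annuli at infinity vanishing thanks to interior gradient estimates combined with stabilisation, so that $0\le\int(\,\tfrac{-\Delta q}{q}-\tfrac{-\Delta V}{V}\,)(q^2-V^2)=\int b(q^{\sigma-1}-V^{\sigma-1})(V^2-q^2)\le0$ forces $q\equiv V$. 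Making the stabilisation rigorous — in particular excluding degeneration to $0$ at infinity for the minimal solution — and controlling the localisation errors uniformly, with $a,b$ only assumed continuous with positive limits and neither radial nor monotone, is exactly where Du and Dong's barrier constructions do the real work, and is the step I expect to be the main obstacle.
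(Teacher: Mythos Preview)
The paper does not prove this theorem at all: it is quoted as a black-box result from \cite{DD} (Dong and Du) and invoked directly in the proof of Theorem~\ref{sasaki-theo}. There is therefore no ``paper's own proof'' to compare your proposal against.

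As to the proposal itself, your outline follows the standard route for entire logistic equations --- constant supersolution from the global bound $\sup a/\inf b<\infty$, principal-eigenfunction subsolutions on large balls once $\lambda_1(-\Delta-a;B_R)<0$, monotone iteration, a diagonal/compactness limit, and a Brezis--Oswald/D\'iaz--Sa\'a identity for uniqueness --- and this is indeed essentially the strategy in \cite{DD}. You correctly identify the genuine issue: on $\mathbb{R}^n$ the integration-by-parts identity requires controlling boundary terms at infinity, which in turn needs the stabilisation $q(x)\to (a_\infty/b_\infty)^{1/(\sigma-1)}$ as $|x|\to\infty$ for \emph{every} bounded positive solution. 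Your sketch defers this to ``Du and Dong's barrier constructions,'' which is honest but means the proposal is an outline rather than a complete proof; the stabilisation (and in particular the exclusion of decay to $0$ along sequences for the minimal solution) is exactly the content that \cite{DD} supplies and that your write-up would still need to fill in.
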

In the following subsections, we prove Theorems \ref{sasaki-theoint}, \ref{tubular-theo}, \ref{ESAKint}.
\subsection{Proof of Theorem \ref{sasaki-theoint} }
Suppose that $\left(M,D,J\right)$ is a 3-dimensional Sasakian manifold given by an equation $v = F(z,\bar{z})$,  such that the limits
    \begin{equation}\label{limits}
        \displaystyle{\lim_{|z| \to \infty}}F_{z\bar{z}} > 0 \quad \text{and} \quad \displaystyle{\lim_{|z| \to \infty}}\partial_{z\bar{z}}\log \left(F_{z\bar{z}}\right) < 0
    \end{equation}
exist, and let $\Lambda$ be an arbitrary positive constant. 

Consider the metric \eqref{shearfreemetric} with 
\begin{align*}
\mu&=dz,\\
\lambda&=\frac{1}{2F_{z\bar{z}}}\left (du-\I F_z+\I F_{\bar{z}}\right ).
\end{align*}
We determine the functions $\cP, \mathcal{X}, \mathcal{Y}$ and $\cH$ using Theorem \ref{theoHLN}. From \eqref{equ-t} and \eqref{eqx-y} the functions $\mathcal{X}, \mathcal{Y}$ and $\cP$ are
\begin{align*}
\mathcal{X}=c + 2\p \log p, \qquad \mathcal{Y}=\I \left ( c + 2\p \log p\right ), \qquad  \cP = \frac{p}{\cos(\frac{r }{2})},
\end{align*}
where  $c$ is given by \eqref{fun-c}. We now determine the function $p$. Note that 
 the equation $\mathcal{B}=0$ from \eqref{equa-p-s} with our ansatz $t=0$ becomes $\mathcal{B}_o=0$, where $\mathcal{B}_o$ is given by \eqref{Bo}. Assuming that $m$ is purely imaginary and that $p$ does not depend on $u$ this simplifies further to
\begin{equation}\label{equ-p-s1}
    p_{z\bar{z}} + \frac{\bar{c}}{2} \,p_z + \frac{c}{2}\,p_{\bar{z}} + \left(\frac{|c|^2}{4}+ \frac{3}{4} \p_z\bar{c}\right)p = \frac{\Lambda}{3} p^3.
\end{equation}
Let $q$ be the function defined by
\begin{equation}\label{subs1}
	p = f\,q, \quad \text{where} \quad f = \sqrt{F_{z\bar{z}}}.
\end{equation}
In terms of $q$ the equation \eqref{equ-p-s1} becomes the logistic elliptic equation
\begin{equation}\label{equ-q-s}
    q_{z\bar{z}} + a(z,\bar{z})\,q = b(z,\bar{z})\, q^3,
\end{equation}

with
\begin{equation}\label{equ-q-a}
    a(z,\bar{z}) = \frac{f_{z\bar{z}}}{f} + \frac{f_z}{2f}\bar{c} + \frac{f_{\bar{z}}}{2f}c + \frac{|c|^2}{4} + \frac{3}{4}\p_z\bar{c}
\end{equation}
and
$$b(z,\bar{z}) = \frac{\Lambda}{3} f^2 = \frac{\Lambda}{3}F_{z\bar{z}}.$$
The expression  \eqref{equ-q-a}  for the function $a(z,\bar{z})$ simplifies to 
$$a(z,\bar{z}) = -\frac{1}{4}\partial_{z\bar{z}}\log \left (F_{z\bar{z}}\right) = \frac{1}{4}\operatorname{R},$$
where $ \operatorname{R}$ is given by \eqref{ricKah}.
By our assumptions the limits \eqref{limits} of both $a$ and $b$ exist and are positive real numbers, and, by Theorem \ref{logeq}, the logistic elliptic equation \eqref{equ-q-s} has a unique positive solution. This determines the function $p$.

It remains to choose the purely imaginary function $m$ such that $R_{13}=0$ and $\operatorname{C_{4132}}\neq 0$. Let $m = \I(\p_o\bar{w})^3$, where
$w = u + \I F(z, \bar{z})$ is a CR function, that is $\bar{\p}w = \left(\p_{\bar{z}} - \I F_{\bar{z}}\p_u\right)w = 0.$ From \eqref{sasframe} it follows that
$$\p_o w  = 2F_{z\bar{z}} \neq 0,$$
is a real function, hence $m$ is purely imaginary. Now the non-zero function $m$ satisfies
$$\p m + 3cm = 3\I (\p_o \bar{w})^2\p\p_o \bar{w} + 3\I c(\p_o \bar{w})^3 = 3\I (\p_o \bar{w})^2(\p_o\p \bar{w} - c\p_o\bar{w}) + 3\I c(\p_o \bar{w})^3 = 0,$$
since $\p\bar{w} = 0$ and $\p_o\p - \p\p_o = c\p_o$.  Then, by \eqref{R13}, $\Ric_{13} = 0$ and the function $\cH$ defined by \eqref{curlH} is determined. Since, $m\ne 0$, the Weyl component 
$$\operatorname{C_{4132}}=\frac{(1 + e^{\I r })^3}{2p^6}m$$
is non-zero, and therefore, 
 the metric $g$ is of Petrov type II or D.
\hfill $\Box$ \medskip

In the special case when $M$ is a tubular manifold, the logistic equation reduces to an ODE and the conclusion of Theorem  \ref{sasaki-theoint} holds for any real constant $\Lambda$, without the limit conditions \eqref{limits} on $a,b$.
\subsection{Proof of Theorem \ref{tubular-theo}}
Let $\left(M,D, J\right)$ be a 3-dimensional tubular CR manifold with defining function $v = F(y)$. The proof is the same as the proof of Theorem  \ref{sasaki-theoint}  with one exception. The equation  $\mathcal B_o=0$ becomes the ODE
 \begin{equation}\label{equ-tp1}
    p_{yy} - \I \bar{c}\,p_y + \I c\,p_y + \left({|c|}^2 - \frac{3}{2}\I\, \bar{c}_y\right)p = \frac{16}{3}\Lambda p^3,
\end{equation}
which, by the Picard–Lindelöf theorem has a two-parametric family of solutions for any $\Lambda$. We remark that by the same substitution as in the proof of Theorem \ref{sasaki-theoint} the ODE \ref{equ-tp1} simplifies to 
\begin{equation}\label{equ-tq2}
    q_{yy} + R\,q = \frac{16}{3}\Lambda f^2 q^3.
\end{equation}
\phantom{x}\hfill $\Box$ \medskip
%
\subsection{Proof of Theorem \ref{ESAKint}}
Let $(M,D,J)$ be a 3-dimensional Sasakian manifold with defining equation $v = F(z,\bar{z})$, and assume that the underlying Kähler manifold with Kähler metric
$$h = 2F_{z\bar{z}} dz \, d\bar{z},$$
 is Einstein with non-zero Einstein constant $\Lambda_o$ i.e., 
 $$\Ric(h) = R = \Lambda_o F_{z\bar{z}}.$$
 We follow again the proof of Theorem \ref{sasaki-theoint}. The equation \eqref{equ-q-s} takes now the form
\begin{equation}\label{logeq1}
    q_{z\bar{z}} + \left (\frac{\Lambda_o}{4}F_{z\bar{z}} \right)q = \left (\frac{\Lambda}{3}F_{z\bar{z}}\right ) q^3.
\end{equation}
For any real constant $\Lambda$, such that $\Lambda\Lambda_o > 0$ the constant function
$$q = \pm \sqrt{\frac{3\Lambda_o}{4\Lambda}},$$
is a solution of the above equation \eqref{logeq1}. 

It remains to show that in this case $R_{33}=0$, hence the metric \eqref{shearfreemetric} becomes Einstein.
Notice that the function $\cI$ from \eqref{eqI} now becomes 
$$\cI = \frac{1}{2}\p c + \frac{1}{4}c^2,$$ 
and consequently,
$$\p_z\bar{\cI} = \frac{1}{2}\p_{z} \left(\p_{\bar{z}} \bar{c}\right) + \frac{1}{2}\bar{c}\,\p_z \bar{c} = \frac{\Lambda_o}{2}\p_{\bar{z}} F_{z\bar{z}} - \frac{\Lambda_o}{2}\p_{\bar{z}} F_{z\bar{z}} = 0.$$
This reduces \eqref{R33} to
\begin{equation*}
	  \Ric_{33} = \frac{8\cos^4\left(\frac{r}{2}\right)}{p^4} \left(\partial + 2c\right) \left(\frac{16\I}{p^3}\p_o\left(\frac{m}{p^4} \right) \right). 
\end{equation*}
Since
$$\p_o m=8\I \p_o\left (F_{z\bar{z}}\right )=0, $$
and $\p_o p = 0$, this implies $\Ric_{33} = 0.$    \hfill $\Box$

\section{Examples}

Let $(M,D,J)$ be a 3-dimensional Sasakian manifold with defining function
\begin{equation}\label{harmonic}
    v = F(z,\bar{z}) = z\overline{\varphi(z)} + \bar{z}\varphi(z),
\end{equation}
where $\varphi$ is a holomorphic function, so that
$$c = -\frac{\varphi''(z)}{\overline{\varphi(z)} + \varphi'(z)}.$$
Then, there exists a 4-dimensional lift with Ricci flat Lorentzian metric $g$ of Petrov type II or D. The metric $g$ is given by
$$g = \frac{2\Re \varphi'}{\cos^2(\frac{r}{2})}\left(h + \lambda'\left(dr + \cW dz + \overline{\cW}d\bar{z} + \frac{\cH}{4\Re \varphi'} \lambda' \right) \right),$$
where
$$h = 4\Re \varphi'\, dz \, d\bar{z}, \quad \lambda' = \psi\lambda, \quad \cW = -\I c (e^{-\I r} + 1),$$
and the function $\cH$ is defined by \eqref{curlH} with $m = \mathrm{i}\psi^{-3}.$

Note that for the defining function \eqref{harmonic}
$$\left(F_{z\bar{z}}\right)_{z\bar{z}} = 0.$$
Replacing this condition by
$$\left(\left(F_{z\bar{z}}\right)^{\frac{2}{3}}\right)_{z\bar{z}} = 0,$$
yields another Ricci flat 4-dimensional lift of Petrov type II or D. The choices
$$p = \left(F_{z\bar{z}}\right)^{\frac{2}{3}}, \quad m = \mathrm{i}\psi^{-3},$$
together with $\Lambda = 0$, solve all differential equations of Theorem \ref{theoHLN}. The resulting metric becomes
\begin{equation}\label{HFRT}
    g = \frac{\left (F_{z\bar{z}}\right)^{\frac{4}{3}}}{\cos^2(\frac{r}{2})}\left(h + \lambda'\left(dr + \cW dz + \overline{\cW}d\bar{z} + \frac{\cH}{2F_{z\bar{z}}} \lambda' \right) \right),
\end{equation}
with 
$$\cW = -\frac{\I}{3}c(e^{-\I r} + 1),$$
and the function $\cH$ defined by \eqref{curlH} with $m = \mathrm{i}\psi^{-3}.$ The metric \eqref{HFRT} is an example of the Fefferman-Robinson-Trautman metric introduced in \cite{SG} in relation with the embedding problem of a 3-dimensional CR manifold. For more details see \cite{SG}.

\begin{bibdiv}
\begin{biblist}
\bib{ACHK}{article}{
   author={Alekseevsky, D. V.},
   author={Cort\'{e}s, V.},
   author={Hasegawa, K.},
   author={Kamishima, Y.},
   title={Homogeneous locally conformally K\"{a}hler and Sasaki manifolds},
   journal={Internat. J. Math.},
   volume={26},
   date={2015},
   number={6},
   pages={1541001, 29},
   issn={0129-167X},
   review={\MR{3356872}},
   doi={10.1142/S0129167X15410013},
}

\bib{AGS}{article}{
   author={Alekseevsky, Dmitri V.},
   author={Ganji, Masoud},
   author={Schmalz, Gerd},
   title={CR-geometry and shearfree Lorentzian geometry},
   conference={
      title={Geometric complex analysis},
   },
   book={
      series={Springer Proc. Math. Stat.},
      volume={246},
      publisher={Springer, Singapore},
   },
   date={2018},
   pages={11--22},
   review={\MR{3923214}},
}
\bib{BRT}{article}{
   author={Baouendi, M. S.},
   author={Rothschild, Linda Preiss},
   author={Tr\`eves, F.},
   title={CR structures with group action and extendability of CR functions},
   journal={Invent. Math.},
   volume={82},
   date={1985},
   number={2},
   pages={359--396},
   issn={0020-9910},
   review={\MR{809720}},
   doi={10.1007/BF01388808},
	}
\bib{DKS}{article}{
   author={Debney, G. C.},
   author={Kerr, R. P.},
   author={Schild, A.},
   title={Solutions of the Einstein and Einstein-Maxwell equations},
   journal={J. Mathematical Phys.},
   volume={10},
   date={1969},
   pages={1842--1854},
   issn={0022-2488},
   review={\MR{250641}},
   doi={10.1063/1.1664769},
}

\bib{DD}{article}{
   author={Dong, Wei},
   author={Du, Yihong},
   title={Unbounded principal eigenfunctions and the logistic equation on
   $\mathbb{R}^N$},
   journal={Bull. Austral. Math. Soc.},
   volume={67},
   date={2003},
   number={3},
   pages={413--427},
   issn={0004-9727},
   review={\MR{1983874}},
   doi={10.1017/S0004972700037229},
}

\bib{DM}{article}{
   author={Du, Yihong},
   author={Ma, Li},
   title={Logistic type equations on $\mathbb{R}^N$ by a squeezing method
   involving boundary blow-up solutions},
   journal={J. London Math. Soc. (2)},
   volume={64},
   date={2001},
   number={1},
   pages={107--124},
   issn={0024-6107},
   review={\MR{1840774}},
   doi={10.1017/S0024610701002289},
}

 \bib{Arman2}{article}{
   author={Fino, Anna},
   author={Leistner, Thomas},
   author={Taghavi-Chabert, Arman},
   title={Almost Robinson geometries.},
   review={ https://arxiv.org/abs/2102.05634}
} 

\bib{Arman3}{article}{
   author={ Fino, Anna},
   author={Leistner, Thomas},
   author={Taghavi-Chabert, Arman},
   title={Optical geometries.},
   review={ https://arxiv.org/abs/2009.10012}
}

   \bib{G}{thesis}{
   author={Ganji, Masoud},
   title={Shearfree Lorentzian geometry and CR geometry,},
   journal={},
   date={PhD thesis,University of New England,
     2019},
}
\bib{GoldSa}{article}{
   author={Goldberg, J. N.},
   author={Sachs, R. K.},
   title={A theorem on Petrov types},
   journal={Acta Phys. Polon.},
   volume={22},
   date={1962},
   number={suppl.},
   pages={13--23},
   issn={0001-673X},
   review={\MR{156679}},
}
\bib{GHN}{article}{
   author={Gover, A. Rod},
   author={Hill, C. Denson},
   author={Nurowski, Pawe\l },
   title={Sharp version of the Goldberg-Sachs theorem},
   journal={Ann. Mat. Pura Appl. (4)},
   volume={190},
   date={2011},
   number={2},
   pages={295--340},
   issn={0373-3114},
   review={\MR{2786175}},
   doi={10.1007/s10231-010-0151-4},
}

\bib{HLN}{article}{
   author={Hill, C. Denson},
   author={Lewandowski, Jerzy},
   author={Nurowski, Pawe\l },
   title={Einstein's equations and the embedding of 3-dimensional CR
   manifolds},
   journal={Indiana Univ. Math. J.},
   volume={57},
   date={2008},
   number={7},
   pages={3131--3176},
   issn={0022-2518},
   review={\MR{2492229}},
   doi={10.1512/iumj.2008.57.3473},
}

\bib{MR1067341}{book}{
   author={Jacobowitz, Howard},
   title={An introduction to CR structures},
   series={Mathematical Surveys and Monographs},
   volume={32},
   publisher={American Mathematical Society, Providence, RI},
   date={1990},
   pages={x+237},
   isbn={0-8218-1533-4},
   review={\MR{1067341}},
   doi={10.1090/surv/032},
}
	\bib{Jac}{article}{
   author={Jacobowitz, Howard},
   title={The canonical bundle and realizable CR hypersurfaces},
   journal={Pacific J. Math.},
   volume={127},
   date={1987},
   number={1},
   pages={91--101},
   issn={0030-8730},
   review={\MR{876018}},
}
\bib{LNT}{article}{
   author={Lewandowski, Jerzy},
   author={Nurowski, Pawe\l },
   author={Tafel, Jacek},
   title={Einstein's equations and realizability of CR manifolds},
   journal={Classical Quantum Gravity},
   volume={7},
   date={1990},
   number={11},
   pages={L241--L246},
   issn={0264-9381},
   review={\MR{1078890}},
}
\bib{PZ}{book}{
   author={Polyanin, Andrei D.},
   author={Zaitsev, Valentin F.},
   title={Handbook of nonlinear partial differential equations},
   edition={2},
   publisher={CRC Press, Boca Raton, FL},
   date={2012},
   pages={xxxvi+876},
   isbn={978-1-4200-8723-9},
   review={\MR{2865542}},
}
\bib{Rob}{article}{
   author={Robinson, Ivor},
   title={Null electromagnetic fields},
   journal={J. Mathematical Phys.},
   volume={2},
   date={1961},
   pages={290--291},
   issn={0022-2488},
   review={\MR{127369}},
   doi={10.1063/1.1703712},
}
\bib{RT1}{article}{
   author={Robinson, Ivor},
   author={Trautman, Andrzej},
   title={Cauchy-Riemann structures in optical geometry},
   conference={
      title={Proceedings of the fourth Marcel Grossmann meeting on general
      relativity, Part A, B},
      address={Rome},
      date={1985},
   },
   book={
      publisher={North-Holland, Amsterdam},
   },
   date={1986},
   pages={317--324},
   review={\MR{879758}},
}
	\bib{Traut4}{article}{
   author={Robinson, Ivor},
   author={Trautman, Andrzej},
   title={Conformal geometry of flows in $n$ dimensions},
   journal={J. Math. Phys.},
   volume={24},
   date={1983},
   number={6},
   pages={1425--1429},
   issn={0022-2488},
   review={\MR{708658}},
   doi={10.1063/1.525878},
}

\bib{SG}{article}{
   author={Schmalz, Gerd},
   author={Ganji, Masoud},
   title={A criterion for local embeddability of three-dimensional CR
   structures},
   journal={Ann. Mat. Pura Appl. (4)},
   volume={198},
   date={2019},
   number={2},
   pages={491--503},
   issn={0373-3114},
   review={\MR{3927166}},
   doi={10.1007/s10231-018-0785-1},
}

	\bib{Tafel1}{article}{
   author={Tafel, Jacek},
   title={On the Robinson theorem and shearfree geodesic null congruences},
   journal={Lett. Math. Phys.},
   volume={10},
   date={1985},
   number={1},
   pages={33--39},
   issn={0377-9017},
   review={\MR{796997}},
   doi={10.1007/BF00704584},
}
\bib{Arman1}{article}{
   author={Taghavi-Chabert, Arman},
   title={Twisting non-shearing congruences of null geodesics, almost CR structures and Einstein metrics in even dimensions.},
  journal ={Annali di Matematica (2021).},
   doi={https://doi.org/10.1007/s10231-021-01133-2}
}

	
\bib{Traut3}{article}{
   author={Trautman, Andrzej},
   title={Robinson manifolds and the shear-free condition},
   booktitle={Proceedings of the Conference on General Relativity, Cosmology
   and Relativistic Astrophysics (Journ\'{e}es Relativistes) (Dublin, 2001)},
   journal={Internat. J. Modern Phys. A},
   volume={17},
   date={2002},
   number={20},
   pages={2735--2737},
   issn={0217-751X},
   review={\MR{1925728}},
   doi={10.1142/S0217751X02011709},
}
	
	\bib{Traut2}{article}{
   author={Trautman, Andrzej},
   title={Robinson manifolds and Cauchy-Riemann spaces},
   journal={Classical Quantum Gravity},
   volume={19},
   date={2002},
   number={2},
   pages={R1--R10},
   issn={0264-9381},
   review={\MR{1885472}},
   doi={10.1088/0264-9381/19/2/201},
}
	
	\bib{Traut1}{article}{
   author={Trautman, Andrzej},
   title={On complex structures in physics},
   conference={
      title={On Einstein's path},
      address={New York},
      date={1996},
   },
   book={
      publisher={Springer, New York},
   },
   date={1999},
   pages={487--501},
   review={\MR{1658884}},
}
\bib{XF1}{article}{
   author={Zhang, Xuefeng},
   author={Finley, Daniel},
   title={CR structures and twisting vacuum spacetimes with two Killing
   vectors and cosmological constant: type II and more special},
   journal={Classical Quantum Gravity},
   volume={30},
   date={2013},
   number={11},
   pages={115006, 20},
   issn={0264-9381},
   review={\MR{3055095}},
   doi={10.1088/0264-9381/30/11/115006},
}
\bib{XF2}{article}{
   author={Zhang, Xuefeng},
   author={Finley, Daniel},
   title={Lower order ODEs to determine new twisting type N Einstein spaces
   via CR geometry},
   journal={Classical Quantum Gravity},
   volume={29},
   date={2012},
   number={6},
   pages={065010, 24},
   issn={0264-9381},
   review={\MR{2902947}},
   doi={10.1088/0264-9381/29/6/065010},
}
	

\bib{EKS1}{article}{
   author={Ezhov, Vladimir},
   author={Kol\'{a}\v{r}, Martin},
   author={Schmalz, Gerd},
   title={Rigid embeddings of Sasakian hyperquadrics in $\mathbb{C}^{n+1}$},
   journal={J. Geom. Anal.},
   volume={28},
   date={2018},
   number={3},
   pages={2185--2205},
   issn={1050-6926},
   review={\MR{3833790}},
   doi={10.1007/s12220-017-9900-6},
}

\end{biblist}
\end{bibdiv}

\end{document}